\def\Z{\mathbb{Z}}
\def\N{\mathbb{N}}
\def\S{\mathbb{S}}
\newtheorem{thm}{\bf Theorem}[section]
\newtheorem{lemma}[thm]{\bf Lemma}
\theoremstyle{definition}
\begin{document}

\title[On the distribution of subset sums of certain sets  in  $\Z^2_p$]{On the distribution of subset sums of certain sets  in $\Z^2_p$}

\author[]{Norbert Hegyv\'ari}
\address{Norbert Hegyv\'{a}ri, ELTE TTK,
E\"otv\"os University, Institute of Mathematics, H-1117
P\'{a}zm\'{a}ny st. 1/c, Budapest, Hungary and Alfr\'ed R\'enyi Institute of Mathematics, H-1364 Budapest, P.O.Box 127.}
 \email{hegyvari@renyi.hu}

\maketitle

\begin{abstract}
A given subset $A$ of natural numbers is said to be complete if every element of $\N$ is the sum of distinct terms taken from $A$. This topic is strongly connected to the knapsack problem which is known to be NP complete. 

Interestingly if $A$ and $B$ are complete sequences then $A\times B$ is not necessarily complete in $\N^2$. In this paper we consider a modular version of this problem, motivated by the communication complexity problem of [2].

AMS 2010 Primary 11B30, 11B39, Secondary 11B75

Keywords: Subset sums, additive combinatorics, matching in special graph 
\end{abstract}

\section{Introduction}
Let $\S$ be any additive semigroup, For sets $A,B\subset \S$ the sum is defined by $A \pm B:=\{a\pm b:a\in A; \ b\in B\}$, 
For any $X\subseteq \S$ let
$$
FS(X):=\{\sum_{i=1}^\infty\varepsilon_ix_i: \ x_i\in X, \ \varepsilon_i \in \{0,1\}, \ \sum_{i=1}^\infty\varepsilon_i<\infty\},
$$
and one can extend the notion of subset sums $FS(X)$ for higher dimension, i.e. when $X\subseteq \S^k; k\geq 1$.
A set $X\subseteq \S$ is said to be complete respect to $\S$ if every element of $\S$ is the sum of distinct terms taken from $X$. If $\S=\N$, there is a wide literature on the subject (A good survey can be found e.g. in [6], [10] and [12]).

This type of problems are highly connected to the knapsack or subset-sum problem. The problem is to determine, given positive integers $a_1,a_2,\dots,a_n$ and an integer $m$, whether there is a subset of the set $\{a_j\}$ that sums up to $m$. It is well known as an NP-complete problem. This topic is connected to some communication  complexity problem  too (see e.g. [2]) which motivates the present work. 

Completeness in higher dimensions is very different from the one-dimensional case. For example, a classic result is that if a series contains more than $c\sqrt{n}$ elements up to $n>n_0$, then it is complete (see [12]). While in the two-dimensional case, one can give a dense set $A$ for which $FS(A)$ does not even contain an arithmetic progression (see [8] and [9]). Further results can be found in [1] and [4].

\smallskip

Let $F_n$ be the $n^{th}$ Fibonacci number, i.e. let $F_0=0; \ F_1=1$ and for $n\geq 0$ $F_{n+2}=F_{n+1}+F_n$. For negative integers let $F_{-n}=(-1)^{n-1}F_n$ when the recursive formula also holds. In section 2 we will investigate the distribution of the subset sums of three sets in $\Z^2_p$.  

More precisely let $A_1=\{2^n\}_{n=0}^\infty \times \{F_n\}_{n=0}^\infty$ , $A_2=\{2^n\}_{n=0}^\infty\times \{2^n\}_{n=0}^\infty$ and $A_3=  \{F_n\}_{n=0}^\infty\times  \{F_n\}_{n=0}^\infty$. We will show that for almost all prim $p$, $FS(A_i) \pmod p=\Z^2_p$ ($i=1,2,3$). For $X\subseteq \N^2$ $X\pmod p=\{(v_1,v_2)\in \Z^2_p: \exists (x_1,x_2)\in X; \ v_i\equiv x_i\pmod p; \ i=1,2 \}$. 

Furthermore we extend the analysis to sets of types $A_4=\{b^n\}_{n=0}^\infty \times \{F_n\}_{n=0}^\infty$ and $A_5=\{b^n\}_{n=0}^\infty \times \{2^n\}_{n=0}^\infty$, where $b>2$ integer. Although the set $\{b^n\}_{n=0}^\infty$ is incomplete for $b>2$, note that there can be an unbounded number of times $b^n$ as one of the coordinates in the sets $A_4$ and $A_5$. Thus, since all positive integers can be represented in the $b$-ary expansion, this question fits the previous ones.

In [2] we also consider the above questions in $\N^2$. Interestingly, the distribution of the above sets is different than in $\Z^2_p$. For example, for $i=1,2$ and $3$, we describe sets $E_i$ for which $\N^2\setminus E_i \subseteq FS(A_i)$. Furthermore, we show that $E_i$ contains arbitrary "large" squares which contain no elements from $FS(A_i)$, and  squares of size $k \times k$ which contain $k\log k$ many elements from $FS(A_i)$.  

\section{Modular case}

In this section we will prove the following theorem:
\begin{thm}\label{b}
There exists a $\delta>0$ such that for all but $O(x/(\log x)^{1+\delta})$ primes $p\leq x$, $|FS(A_4)\pmod p| \geq \frac{p^2}{b}.$
\end{thm}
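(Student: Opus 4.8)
The plan is to realise as many targets $(u,v)\in\Z^2_p$ as possible by exhibiting, for each, an explicit finite subset $S\subseteq A_4$ with $\sum S\equiv(u,v)\pmod p$. Since $A_4=\{b^i\}_{i\ge0}\times\{F_j\}_{j\ge0}$ is a grid, a finite $S\subseteq A_4$ is nothing but a finite $0$--$1$ matrix $(M_{ij})$, and $\sum S=\bigl(\sum_i d_i b^i,\ \sum_j e_j F_j\bigr)$, where $d_i=\sum_j M_{ij}$ and $e_j=\sum_i M_{ij}$ are its row and column sums. Hence $(u,v)\in FS(A_4)\pmod p$ whenever one can find non-negative integers $(d_i)$, $(e_j)$, finitely many nonzero, with $\sum_i d_i=\sum_j e_j$, $\sum_i d_i b^i\equiv u$ and $\sum_j e_j F_j\equiv v\pmod p$, that are simultaneously realisable as the margins of a $0$--$1$ matrix. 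This last requirement is a bipartite realisation (Gale--Ryser) condition --- the ``matching in a special graph'' of the keywords --- and it is automatic as soon as one of the two margin vectors is $\{0,1\}$-valued, since one then merely splits the columns into consecutive blocks of the prescribed sizes.

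So, writing $t=\mathrm{ord}_p(b)$ (defined because $p\nmid b$) and letting $\alpha=\alpha(p)$ be the least $j\ge 1$ with $p\mid F_j$ (which exists for every prime), I would: represent the least residue of $u$ in base $b$, using indices $0,\dots,\lfloor\log_b p\rfloor$ with digits $d_i\in\{0,\dots,b-1\}$, of weight $\sigma_b(u)$; represent the least residue of $v$ by a Zeckendorf expansion, all $e_j\in\{0,1\}$, of weight $z(v)=O(\log p)$; and then equalise the two weights. Adjoining $p$ fresh indices all divisible by $t$ adds $p$ to the $b$-side weight and $p\cdot1\equiv0\pmod p$ to its value, the new $d_i$'s all being $1$, so the $b$-side weight may be taken to be $\sigma_b(u)+kp$ for any $k\ge 0$; adjoining a fresh index that is a multiple of $\alpha$ has $F_j\equiv0\pmod p$ and adds exactly $1$ to the $F$-side weight with column weight still $\le 1$, so the $F$-side weight may be any integer $\ge z(v)$. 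Picking a common weight $N=\sigma_b(u)+kp\ge z(v)$: all $d_i\le b-1$, all $e_j\le 1$, the realisation condition is vacuous, the matrix $S$ exists, and $(u,v)\in FS(A_4)\pmod p$. Run over all $(u,v)$, this in fact gives $FS(A_4)\pmod p=\Z^2_p$, hence a fortiori the stated $|FS(A_4)\pmod p|\ge p^2/b$.

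The statement as phrased is thus weaker than what a direct construction gives; the factor $b$ and the exceptional set are presumably artefacts of a uniform or Fourier-analytic treatment carried out in parallel with $A_1,A_2,A_3$, in which the first coordinate is handled via a base-$b$ digit expansion (whence the $1/b$ density loss) and control breaks down exactly when $t=\mathrm{ord}_p(b)$ or the Pisano period $\pi(p)$ (which is $\asymp\alpha(p)$) is below a threshold $y$: $t\le y$ forces $p\mid b^d-1$ for some $d\le y$, so $p$ lies among $\le\sum_{d\le y}\log_2(b^d)\ll_b y^2$ primes, and $\alpha(p)\le y$ forces $p\mid F_d$ for some $d\le y$, again $\ll y^2$ primes; taking $y\asymp \sqrt{x}/(\log x)^{(1+\delta)/2}$ makes both exceptional families $O\bigl(x/(\log x)^{1+\delta}\bigr)$. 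The only genuinely non-routine point, in any version, is the joint realisability in the equalisation step --- choosing the two expansions so that the two degree sequences are simultaneously graphical while both congruences hold --- and the clean device, as above, is to force the $F$-side (or the $b$-side) to be $\{0,1\}$-valued, after which the matching step is immediate and everything else is bookkeeping.
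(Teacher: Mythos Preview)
Your construction is correct and in fact yields the stronger conclusion $FS(A_4)\pmod p=\Z^2_p$ for every prime $p\nmid b$, with no exceptional set and no loss of the factor $1/b$. The paper's route is different and less efficient. It first restricts to primes for which $|\{b^n\bmod p\}|>2\sqrt p$ and $|\{F_n\bmod p\}|>3\sqrt p$; this is where the exceptional set $O(x/(\log x)^{1+\delta})$ enters, via the results of Erd\H os--Murty and of Garcia--Luca--Mej\'ia, not via any Fourier argument as you surmise. For such $p$ it invokes Olson's theorem to write $v_1=\sum_k\varepsilon_k b^{i_k}$ and $v_2=\sum_k\eta_k F_{j_k}$ with $\varepsilon_k,\eta_k\in\{0,1\}$ over the distinct residues, and then equalises the two weights by the splitting device $F_r\mapsto F_{r-1}+F_{r-2}$ on one side and $b^m\mapsto b\cdot b^{m-1}$ on the other. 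Because the $b$-side splitting raises the weight by $b-1$ at each step, the paper cannot in general match the Fibonacci weight exactly: it overshoots by some $r\le b-1$, absorbs the overshoot by appending $r$ reserved Fibonacci residues, and therefore only certifies that a translate of one of $b$ colour classes lies in $FS(A_4)\pmod p$, whence the $p^2/b$ bound by pigeonhole. Your padding trick --- $p$ fresh powers of $b$ each $\equiv 1$, versus single fresh Fibonacci indices each $\equiv 0$ --- sidesteps both the Olson apparatus and the step-size obstruction and is strictly more elementary; it would in fact upgrade Theorem~\ref{b} to the full $\Z^2_p$ for all $p\nmid b$.
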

When $b=2$, the previous theorem says that $|FS(A_1)\pmod p| \geq \frac{p^2}{2}.$ But in this case we get more:
\begin{thm}\label{M}
There exists a $\delta>0$ such that for all but $O(x/(\log x)^{1+\delta})$ primes $p\leq x$, $FS(A_1) \pmod p=\Z^2_p$. 
\end{thm}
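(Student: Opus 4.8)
The plan is to prove the stronger assertion that $FS(A_1)\pmod p=\Z^2_p$ for \emph{every} prime $p$, so the exceptional set in the statement may in fact be taken empty; Theorem~\ref{b} with $b=2$ then enters only as an optional shortcut. The key observation is that $F_0=0$, which forces $A_1$ to contain the whole ``horizontal'' family $\{(2^i,0):i\ge 0\}$, and this is what makes the modular problem collapse (in contrast to the $\N^2$ case discussed in the introduction).

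\emph{Step 1: $FS(A_1)\pmod p$ is invariant under $\Z_p\times\{0\}$.} Take $p$ odd. For any $N$, the subset sums of $\{2^N,2^{N+1},\dots,2^{N+k}\}$ equal $2^N\cdot\{0,1,\dots,2^{k+1}-1\}$, which is a complete residue system mod $p$ once $k\ge\log_2 p$, since $\gcd(2^N,p)=1$. So if $s\in FS(A_1)\pmod p$ is realised by a finite set $G\subseteq A_1$ and $a\in\Z_p$ is given, choose $N$ larger than every exponent occurring as a first coordinate of an element of $G$ and pick a subset of $\{(2^i,0):i\ge N\}\subseteq A_1$ whose first coordinates sum to $a\pmod p$; it is disjoint from $G$, so $s+(a,0)\in FS(A_1)\pmod p$. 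Hence $FS(A_1)\pmod p$ is a union of lines $\Z_p\times\{c\}$, say $FS(A_1)\pmod p=\Z_p\times C$ for some $C\subseteq\Z_p$. (The case $p=2$ is a four-element check using $(2^0,F_0),(2^0,F_1),(2^1,F_1)$.)

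\emph{Step 2: $C=\Z_p$.} For any finite multiset $(j_t)_{t=1}^m$ of indices, the pairs $(2^{i_t},F_{j_t})$ with $i_1,\dots,i_m$ distinct are distinct elements of $A_1$, so $\sum_t F_{j_t}\pmod p$ belongs to $C$; thus $C$ contains the additive submonoid of $\Z_p$ generated by $\{F_n\}_{n\ge 0}$. Since $F_1=1$, that submonoid is all of $\Z_p$, so $C=\Z_p$. (Alternatively: Step~1 gives $|FS(A_1)\bmod p|=p|C|$, so Theorem~\ref{b} with $b=2$ forces $|C|\ge p/2>1$ for all but $O(x/(\log x)^{1+\delta})$ primes $p\le x$, and a submonoid of $\Z_p$ of size $>1$ is $\Z_p$; but the cardinality bound is not really needed.)

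Steps 1 and 2 together give $FS(A_1)\pmod p=\Z^2_p$ for all primes $p$, which is Theorem~\ref{M}. I do not expect a genuine obstacle: the only care needed is the bookkeeping that each adjoined term is honestly distinct from, and disjoint from, those already chosen, which is automatic because $A_1$ supplies pairs $(2^i,0)$ with arbitrarily large exponent, plus the $p=2$ verification by hand. The one conceptual point worth isolating — and the part that truly ``upgrades'' a density statement like Theorem~\ref{b} to the full group — is the submonoid property in Step~2.
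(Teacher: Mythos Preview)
Your argument is correct, and it actually proves the stronger statement that $FS(A_1)\pmod p=\Z_p^2$ for \emph{every} prime $p$, with an empty exceptional set. The engine is the paper's convention $F_0=0$: since $A_1$ is the full Cartesian product, it contains the entire ladder $\{(2^i,0):i\ge 0\}$, and together with $F_1=1$ this collapses the problem exactly as you describe. The bookkeeping in Steps~1 and~2 (disjointness via large exponents, distinctness of pairs via distinct first coordinates) is sound, and the $p=2$ case is handled.

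This is a genuinely different route from the paper's. The paper first invokes the Erd\H os--Murty and Garcia--Luca--Mej\'ia estimates (Lemmas~\ref{2.2} and~\ref{2.3}) to ensure that for most primes both $\{2^n\bmod p\}$ and $\{F_n\bmod p\}$ have more than $2\sqrt p$ residues, then applies Olson's theorem (Lemma~\ref{O}) to write $v_1$ and $v_2$ separately as subset sums, and finally equalises the number of terms on the two sides by repeatedly splitting $F_r=F_{r-1}+F_{r-2}$ or $2^m=2^{m-1}+2^{m-1}$ so that the terms can be paired off into distinct elements of $A_1$. Your approach is far more elementary and yields a sharper conclusion for $A_1$ (and the same idea handles $A_3$, where again $0\in\{F_n\}$). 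What the paper's machinery buys is uniformity: it makes no use of $0$ lying in either factor, so the same template treats $A_2=\{2^m\}\times\{2^n\}$ and the sets $A_4,A_5$, where your shortcut is unavailable. In short, the authors traded an unnecessary exceptional set in the $A_1$ case for a method that transfers to the other theorems.
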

The following theorem can be proved in a similar way as Theorem \ref{M}
\begin{thm}
There exists a $\delta>0$ such that for all but $O(x/(\log x)^{1+\delta})$ primes $p\leq x$, $FS(A_i) \pmod p=\Z^2_p$ ($i=2,3$). 
\end{thm}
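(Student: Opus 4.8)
\emph{Proof plan.} The plan is to follow the scheme of Theorem~\ref{M}. Fixing an odd prime $p$ and a target $(v_1,v_2)\in\Z^2_p$, I would build a \emph{finite} subset of $A_i$ (which is legitimate even though many residues coincide, since $A_i\subseteq\N^2$ is infinite) realizing $(v_1,v_2)$ modulo $p$ in two stages. \emph{Stage 1:} use only generators whose second index makes the second coordinate as small as possible --- $F_0=0$ for $i=3$, or simply $2^0=1$ for $i=2$ --- and choose which ones so that the first coordinate becomes any prescribed residue; this is possible because the subset sums of the first-coordinate sequence already cover $\Z_p$ (binary expansions for $\{2^n\}$; completeness of the Fibonacci sequence for $\{F_n\}$). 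This pins down the first coordinate and leaves the second at a controlled auxiliary value. \emph{Stage 2:} correct the second coordinate using ``zero blocks'' --- families of generators that share one common second index $b$ and whose first coordinates sum to $0$ modulo $p$, so that adjoining such a block leaves the first coordinate unchanged and shifts the second by a fixed nonzero multiple of the $b$-th term of the second-coordinate sequence. A union of such blocks over a suitable index set then moves the second coordinate onto its target. The zero blocks come from $2^0+2^1+\dots+2^{d-1}=2^d-1\equiv0\pmod p$ with $d=\mathrm{ord}_p(2)$ (for $i=2$) and from $F_{\alpha(p)}\equiv0\pmod p$, $\alpha(p)$ the rank of apparition (for $i=3$); since $\gcd(d,p)=1$, the multipliers that appear are invertible mod $p$.

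Carrying this out: for $i=3$, both coordinates run over $\{F_n\}$, so Stage~1 would take the pairs $(F_a,F_0)=(F_a,0)$ with $a$ in a Zeckendorf-type expansion of (the least residue of) $v_1$, contributing $(v_1,0)$, and Stage~2 would take the single-pair blocks $\{(F_{\alpha(p)},F_b)\}$, each contributing $(0,F_b)$, over an index set representing $v_2$. For $i=2$, both coordinates run over $\{2^n\}$; as no index gives the value $0$, Stage~1 would take the pairs $(2^a,2^0)$ (so the second coordinate ends at the number of terms used), and Stage~2 would take the full-period blocks $\{(2^0,2^b),\dots,(2^{d-1},2^b)\}$, each contributing $(0,d\,2^b)$, choosing the $b$'s so that $\sum d\,2^b$ equals the residual second coordinate --- which works because $\{2^b:b\ge1\}$ has subset sums exactly the even residues, covering $\Z_p$ for odd $p$. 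Throughout, I would keep the chosen pairs pairwise distinct in $\N^2$: the Stage-1 pairs all share one second coordinate ($0$, resp.\ $1$), the Stage-2 pairs have strictly larger second coordinates, distinct block indices give disjoint blocks, and I would sidestep the coincidence $F_1=F_2$ simply by never using the Fibonacci index $1$.

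I expect the main obstacle to be not any single hard estimate but the Stage-2 bookkeeping: one must check that after deleting the indices reserved for collision-avoidance (the Fibonacci index $1$, or the second index $0$ used up in Stage~1) the remaining auxiliary sequence --- $\{F_b\}_{b\ge2}$ or $\{2^b\}_{b\ge1}$ --- still has subset sums covering all of $\Z_p$, and that the full set of zero-block first indices $0,\dots,d-1$ coexists with the Stage-1 first indices without producing a repeated pair. This is precisely the verification carried out in the proof of Theorem~\ref{M}, now with $\{F_n\}$ in place of $\{2^n\}$ in the relevant coordinate. One byproduct worth noting: this construction appears to work for every odd prime $p$ (with $p=2$ handled directly), so the exceptional set of size $O(x/(\log x)^{1+\delta})$ enters only to keep the statement uniform with Theorems~\ref{b} and~\ref{M}, where the multiplicative order of $2$ must genuinely be controlled.
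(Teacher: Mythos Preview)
Your proposal is correct, but it follows a genuinely different route from the paper's intended one. The paper states only that this theorem ``can be proved in a similar way as Theorem~\ref{M}''; that proof restricts to primes $p$ for which the residue sets of $\{2^n\}$ and $\{F_n\}$ each have more than $2\sqrt{p}$ elements (Lemmas~\ref{2.3} and~\ref{2.2}), applies Olson's theorem (Lemma~\ref{O}) separately to each coordinate to obtain two subset-sum representations, and then equalises the \emph{number} of summands on the two sides by repeatedly splitting a term ($2^m=2^{m-1}+2^{m-1}$ or $F_r=F_{r-1}+F_{r-2}$) so that the two lists can be zipped into pairwise distinct pairs. The exceptional set of primes is genuinely needed there, because Olson's hypothesis fails when the relevant orbit is short.

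Your argument bypasses Olson entirely: you exploit that $\{2^n\}$ and $\{F_n\}$ are already complete in $\N$ (binary and Zeckendorf expansions), and you decouple the two coordinates by manufacturing ``zero blocks'' from the multiplicative order of $2$ or the Fibonacci rank of apparition of $p$. This is more elementary --- no density input from Erd\H{o}s--Murty or Garcia--Luca--Mejia is required --- and, as you observe, it in fact yields the stronger conclusion $FS(A_i)\pmod p=\Z_p^2$ for \emph{every} prime $p$, not merely for almost all. One small correction: your Stage~2 bookkeeping is \emph{not} ``precisely the verification carried out in the proof of Theorem~\ref{M}'', since the paper never uses zero blocks and relies instead on the splitting/matching device; but your own verification (Stage~1 and Stage~2 pairs separated by their second coordinates, distinct values within each stage after excluding the Fibonacci index $1$, and $\{2^b:b\ge1\}$ hitting every residue modulo an odd $p$) stands on its own and is sound as written.
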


\begin{proof}[Proof of Theorem \ref{M}] 
Let $\mathcal{P}_1$ be the sets of all primes up to $x$ for which $|\pmod p\{2^n\}_{n=0}^\infty|>2\sqrt{p}$ and let $\mathcal{P}_2$ be the sets of all primes up to $x$ for which $|\pmod p\{F_n\}_{n=0}^\infty|>2\sqrt{p}$. Denote by $\mathcal{P}=\mathcal{P}_1\cap \mathcal{P}_2$. We will check that for all but $O(x/(\log x)^{1+\delta})$ primes $p\leq x$ belong to $\mathcal{P}$.

We need the following lemmas
\begin{lemma}\label{2.3}
There exists a $\delta>0$ such that for all but $O(\frac{x}{(\log x)^{1+\delta}})$ primes $p\leq x$ we have
\begin{equation}\label{F}
|\{F_n\}_{n=0}^\infty \pmod p|>3\sqrt{p},
\end{equation}
where $\delta>0.086072$ is admissible if $x$ is large enough.
\end{lemma}
It is a slight modification of [7, Lemma 3.1].
\begin{lemma}\label{2.2}
 There exist a $\delta>0$ such that 
\begin{equation}\label{2}
|\{b^n\}_{n=0}^\infty \pmod p|>2\sqrt{p}
\end{equation}
for all but $O(x/(\log x)^{1+\delta})$ primes $p\leq x$. 
One can choose $\delta$ as in the previous lemma.
\end{lemma}
This lemma is a simple consequence of Theorem 3 in [5].

Using these lemmas we get that for all but $O(x/(\log x)^{1+\delta})$ primes $p\leq x$ both (\ref{2}) and (\ref{F}) hold. Denote the set of these primes by $\mathcal{P}$ and we get that for every $p\in \mathcal{P}$ both $T:=|\pmod p\{b^n\}_{n=0}^\infty|>2\sqrt{p}$ and $R:=|\pmod p\{F_n\}_{n=0}^\infty|>2\sqrt{p}$ hold. (We only need the factor $3$ of the Lemma \ref{2.3} when we assume that $b>2$, see section 2.1). The third ingredient of our proof is the following 
\begin{lemma}[Olson]\label{O}
Let $A=\{a_1,\dots, a_k\}$ be distinct non-zero residue classes modulo the prime $p$. If $k>2\sqrt{p}$ then $FS(A)=\Z_p$.
\end{lemma}
See the proof in [11].
From now on in this section we assume that $b=2$. 
Write $\pmod p\{F_n\}_{n=0}^\infty=\{F_{j_1},F_{j_2},\dots, F_{j_R}\}$ and $\pmod p\{2^n\}_{n=0}^\infty=\{2^{i_1},2^{i_2},\dots, 2^{i_T}\}$. 
 Let $(v_1,v_2)$ any point in $\Z^2_p$. By Lemma \ref{O}  we can write $v_1$ in the form $v_1=\sum^T_{m=1}\varepsilon_m2^{i_m}; \ \varepsilon_m\in \{0,1\}$, and $v_2$ in the form $v_2=\sum^R_{k=1}\eta_kF_{j_k}; \ \eta_k\in \{0,1\}$.

Assume first that $\sum^T_{m=1}\varepsilon_m>\sum^R_{k=1}\eta_k$. We are going to find a subset $\{F'_{j_n}\}$ for which $\sum^V_{n=1}\zeta_nF'_{j_n}=v_2$, $\zeta_n>0$ and $\sum_n\zeta_n=T$. Note that we allow $\zeta_n$ to be bigger than one, i.e. we allow some $F'_j$ to be repeated. Then  the elements $(2^{i_1},F'_{j_1} ),(2^{i_2},F'_{j_2}),\dots, (2^{i_T},F'_{j_T})$ are pairwise different (because their first coordinates are pairwise different) and $\sum^T_{t=1}(2^{i_t},F'_{j_t})=(v_1,v_2)$, as we wanted.

So take $F_{i_1}$ and find an element $F_r$, where $r>\sum^T_{m=1}\varepsilon_m$ and $F_r\equiv F_{i_1}\pmod p$. Since the sequence ${F_n}$ is periodic $\pmod p$, there exists such an $r$. Now change $F_r$ to $F_{r-1}+F_{r-2}$. Repeat this changing process altogether 
 $\sum^T_{m=1}\varepsilon_m-\sum^R_{k=1}\eta_k$ many times (i.e. change $F_{r-1}$ to $F_{r-2}+F_{r-3}$, etc.). Since $r>\sum^T_{m=1}\varepsilon_m$ this process is terminated. Then we can matching the terms of powers of two to the (possible repeated) elements of the appropriate Fibonacci elements.

 When $\sum^T_{m=1}\varepsilon_m<\sum^R_{k=1}\eta_k$ the process is the same; just note that we replace a term $2^m$ by $2^{m-1}+2^{m-1}$.
\end{proof}

\subsection{The case when $b>2$} In the general case we give a bound for the cardinality of $FS(A_4) \pmod p$, where $A_4=\{b^n\}_{n=0}^\infty \times \{F_n\}_{n=0}^\infty$ 
\begin{proof}[Proof of Theorem \ref{b}]
Let $p\in  \mathcal{P}$. Split $\pmod p\{F_n\}_{n=0}^\infty=\{F_{j_1},F_{j_2},\dots, \}$ into two disjoint parts $G_1\cup G_2$ where $|G_2|=b-1$. Write $\pmod p\{b^n\}_{n=0}^\infty=\{b^{i_1},b^{i_2},\dots, b^{i_T}\}$, $G_1=\{F{i_1},F_{i_2},\dots, F_{i_V}\}$ and $G_2=\{F_{s_1},F_{s_2},\dots, F_{s_{b-1}}\}$. By the assumption and Lemma \ref{2.2} we get that $V=|G_1|>2\sqrt{p}$. By Lemma \ref{O} for any $(v_1,v_2)\in \Z^2_p$ $v_1$ can be written as $\sum^T_{k=1}\varepsilon_kb^{i_k};\ \varepsilon_k\in \{0,1\}$, and $v_2$ in the form $v_2=\sum^V_{k=1}\eta_kF_{j_k}; \ \eta_k\in \{0,1\}$.

Let us assume that
$\sum^T_{m=1}\varepsilon_m>\sum^V_{k=1}\eta_k$. Then follow the previous process. 


So let us assume that $\sum^T_{m=1}\varepsilon_m<\sum^V_{k=1}\eta_k$. Then take an $m$ large enough for which $b^{m}\equiv b^{i_1}\pmod p$, and change $b^m=b^{m-1}+b^{m-1}+\dots +b^{m-1}$($m$ times). Repeat this process increasing the number of terms in each step by $b$ elements. If one step the number of terms is exactly $\sum^V_{k=1}\eta_k$, then we are done. Otherwise one repetition we overstep $\sum^V_{k=1}\eta_k$ by $r$ many elements and clearly $\min r\leq b-1$. Then completing the sum $\sum^V_{k=1}\eta_kF_{j_k}$ by the terms $F_{s_1},F_{s_2},\dots, F_{s_r}$ we can represent $(v_1,v_2+\sum^r_{t=1}F_{s_t})$ in $FS(A_4)\pmod p$.

Consider a coloring of $\Z^2_p$ according the values of $\sum^r_{t=1}F_{s_t}; (r=0,1,\dots b-1)$, i.e. let $(v_1,v_2)$ be color $r$, if $(v_1,v_2+\sum^r_{t=1}F_{s_t})$ in $FS(A_4)\pmod p$. Note that the coloring is not unique; $(v_1,v_2)$ and $(v'_1,v'_2)$ can have the same color. For clarity, choose the color $r$ to be minimal. Denote by $L_r$, the set of points of color $r$. Now choose the $L_{r_0}\subseteq \Z^2_p$ for which $|L_{r_0}|$ is maximal. We have $|L_{r_0}|\geq \frac{p^2}{b}$. Then $L_{r_0}+\sum^{r_0}_{t=1}F_{s_t}\subseteq FS(A_4)\pmod p$, and $|L_{r_0}+\sum^{r_0}_{t=1}F_{s_t}|=|L_{r_0}||\geq \frac{p^2}{b}$.

\end{proof}
\section*{Acknowledgment}

The research is supported by the National Research, Development and Innovation Office NKFIH Grant No K-129335.


\begin{thebibliography}{99}

\bibitem[1]{1} B. Bakos, P\'alfy, Some results on an encryption method using subset-sums of pseudo-recursive sequences
Discrete Mathematics Letters 5 : 1 pp. 63-67. , 5 p. (2021)

\bibitem[2]{2} B. Bakos, N. Hegyv\'ari and M. P\'alfy, On a communication complexity problem in combinatorial number theory, Moscow Journal of Combinatorics, Vol. 10 (2021), No. 4, 297–302 DOI: 10.2140/moscow.2021.10.297

\bibitem[3]{3} B. Bakos, N. Hegyv\'ari and M. P\'alfy On the distribution of subset sums of certain sets  in  $\N^2$, in progress

\bibitem[4]{4} Chen, YG ; Fang, JH ; Hegyv\'ari, N,
Erd\H os-Birch type question in $\N^r$
J. of Number Theory 187 pp. 233-249. , 17 p. (2018)

\bibitem[5]{5} P. Erd\H os and M. R. Murty, On the order of a mod p, in: Number Theory (Ottawa, ON, 1996),
CRM Proceedings and Lecture Notes, 19 (American Mathematical Society, Providence, RI, 1999),
pp. 87–97.

\bibitem[6]{6} P. Erd\H os, R.L. Graham, Old and new problems and results in combinatorial number theory, Monographies de L'Enseignement Mathématique

\bibitem[7]{7} Garcia V, Luca F and Mejia J., On sums of Fibonacci numbers modulo p, Bulletin of the Australian Mathematical Society 83(03):413-419, DOI:10.1017/S0004972710001693

\bibitem[8]{8} Hegyv\'ari, N. Complete sequences in $\N^2$ European Journal of Combinatorics (17) (1996)  741-749. 

\bibitem[9]{9} Hegyv\'ari, N Subset sums in $\N^2$ Combinatorics Probability and Computing (5) (1996) 393-402.


\bibitem[10]{10} N.B. Nathanson, Additive Number Theory Springer-Verlag New York 1996

\bibitem[11]{11} J. E. Olson, An addition theorem modulo p, J. Combinatorial Theory 5(1968), 45-52

\bibitem[12]{12} T. Tao, V. Vu,  \emph{Additive Combinatorics.} Cambridge University Press, Cambridge 2006



\end{thebibliography}
\end{document}